\newtheorem{theorem}{Theorem}[section]
\begin{document}

\setcounter{page}{1}
\begin{flushleft}
%{\scriptsize TWMS J. App. Eng. Math. V.8, N.1, 2018, pp. ...}
\end{flushleft}
\bigskip
%\bigskip

\title[\centerline{Zonal and Associated Functions on $SO_{0}(p,q)$ Groups
%If your title overflows a single line (and you need a text break), you may continue by (using) "..."
\hspace{0.5cm}}] {Zonal and Associated Functions on $SO_{0}(p,q)$ Groups}

\author[\hspace{0.7cm}\centerline{B.A.Rajabov}] {B.A. RAJABOV$^1$}

\thanks {\noindent $^1$ N. Tusi Shamakhi Astrophysics Observatory, National Academy of Sciences of Azerbaijan, AZ2243, Shamakhi, Azerbaijan,\\
\indent \,\,\, e-mail: balaali.rajabov@mail.ru;}

%\footnote{The second author is partially supported by ... University Research Fund.
% Both authors were supported by the ``..." International Centre for Theoretical Physics.
%The authors thank the referee of this paper for all his suggestions and corrections.}}

\bigskip
\begin{abstract}
Explicit expressions for associated spherical functions of $SO(p,q)$ matrix groups are obtained using a generalized hypergeometric series of two variables. In this paper we present explicit expressions for zonal functions of de Sitter groups and the group of conformal invariance. Moreover, we present a theorem on the transformation of derivative of distributions, concentrated on smooth surfaces, with respect to infinite-dimensional Lie group $C^{\infty}\left(\mathbb{R^{\mathit{n}}},GL(n)\right)$.

\bigskip
\noindent Keywords: Spherical functions, Horn's series, $SO(p,q)$ groups, de Sitter groups $SO(4,1)$ and $SO(3,2)$, conformal group $SO(4,2)$, derivative of distribution
% Four or five keywords

\bigskip
\noindent AMS Subject Classification: 20-20C, 33-33C, 83-83F, 81-81E
% American Mathematical Society Subject Classification
\end{abstract}

\maketitle
\bigskip

%%%%%%%%%%%%%%%%%%%%%%%%%%%%%%%%%%%%%%%%%%%%%%%%%%%%%%%%%%%%%%%%%%%%%%%%%%%%%%%%%%%

\section{Introduction}
The $SO(p,q)$ groups of pseudo-orthogonal matrices and their representations are broadly used in different fields of physics, particularly in quantum field theory, high- energy physics, cosmology and solid-state physics [1-2].

Matrix elements of irreducible unitary representations (IUR) of these groups, particularly spherical functions, play an essential role in the theory of representations of $SO(p,q)$ groups. The detailed study of these functions in the case of $SO(p,1)$ groups can be found in [3,4]. Unlike them, the construction of spherical functions of $SO(p,q),p\geq q\geq2$ groups has not yet been completed. The paper [2] by N. Ya. Vilenkin and A.P. Pavluk, where the relationship between the spherical functions of matrix groups and Herz's functions of matrix arguments [5] was established, has attracted a
remarkable attention.

The purpose of this paper is to find the spherical functions of $SO(p,q),p\geq q\geq2$, groups. The main result consists of the fact that associated spherical functions of the groups $SO(p,q),p\geq q\geq2$, are expressed as generalized hypergeometric Horn's functions of two variables by a unique formula, in spite of essentially different forms of the integral representations of spherical functions at $p\geq q\geq 3$ and $p\geq2, q=2$. The preliminary statements about the obtained results were made in the Proceedings of the Azerbaijan National Academy of Sciences [6-7].

Furthermore, the obtained formula has been proved to be valid for $q=1$, i.e., for $SO(p,1), p\geq2$ groups. This work is continuation of the publication [8]. Main results from this paper are available on website arXiv.org, [9-10]. In this publication we reduce rectified expressions for some formulas [8-10], and also new expressions for zonal functions.

\section{Most degenerate irreducible unitary representations of $SO_{0}(p,q)$ groups}
The most degenerate representations of groups $SO_{0}(p,q)$, i.e., connected components of units of groups of motions of $(p + q)$ -dimensional vector space holding invariant quadratic form:
\[ [k,k]=k_{1}^{2}+\ldots+k_{q}^{2}-k_{q}^{2}-\ldots k_{p+q}^{2}, \]
are given by a complex number $\sigma$ and a number $\varepsilon$, which take the values 0 and 1, and are constructed in the space of homogeneous functions, $F\left(\cdot\right)$, of given parity and defined on the cone $\left[k,k\right]=0$, [1--6].

Let us denote by $D^{(\sigma,\varepsilon)}$ the space of infinitely differentiable functions, $F\left(\cdot\right)$, defined on the cone $\left[k,k\right]=0$ without the point k = 0 and satisfying the following condition:
\begin{equation}\
F(ak)=\left|a\right|^{\sigma}F(k)\mathrm{sign}^{\varepsilon}a,\quad a\neq0,\quad\varepsilon=0,\,1.
\end{equation}
The action of the operator$T^{(\sigma,\varepsilon)}(g)$ of $SO_{0}(p,q)$ in the space  $D^{(\sigma,\varepsilon)}$ is determined as follows:
\begin{equation}
T^{(\sigma,\varepsilon)}(g)F(k)=F\left(g^{-1}k\right),\quad g\in SO_{0}(p,q).
\end{equation}
The space of the representation and the representation itself may have different realizations. Let us consider one of them.

Let us introduce the spherical system of coordinates on the cone $\left[k,k\right]=0$:
\begin{equation}
k=\omega(\cos\phi,\boldsymbol{\vec{\eta}}\sin\phi,\cos\chi,\boldsymbol{\vec{\xi}}\sin\chi),
\end{equation}

where $0<\omega<\infty$, and $\boldsymbol{\vec{\eta}}$ and $\boldsymbol{\vec{\xi}}$ are $(q-1)$-dimensional and $(p-1)$-dimensional unit vectors, respectively.

Spherical angles $\phi$ and $\chi$ change within the following limits:

$0\leq\phi\leq\pi,\;0\leq\chi\leq\pi,$ if $p\geq q\geq3$;

$0\leq\phi<2\pi,\;0\leq\chi\leq\pi,$ if $p=3,\:q=2$ (in this case
$\boldsymbol{\vec{\eta}}$ is reduced to a constant value which we take as $\boldsymbol{\vec{\eta}}=1$);

$0\leq\phi<2\pi,\;0\leq\chi<2\pi,$ if $p=q=2$ (in this case $\boldsymbol{\vec{\xi}}$
and $\boldsymbol{\vec{\eta}}$ are reduced to a constant values which we take as $\boldsymbol{\vec{\eta}}=\boldsymbol{\vec{\xi}}=1$). 

Let us consider the restriction of functions from $D^{(\sigma,\varepsilon)}$ on the cross-section, $\omega=1$ of the cone $\left[k,k\right]=0$:
\begin{equation}
f(\phi,\boldsymbol{\vec{\eta}};\chi,\boldsymbol{\vec{\xi}}\;)=F(k)|_{\omega=1}.
\end{equation}
Then, in virtue of homogeneity of $SO_{0}(p,q)$ we obtain:
\begin{equation}
F(k)=\omega^{\sigma}f(\phi,\boldsymbol{\vec{\eta}};\chi,\boldsymbol{\vec{\xi}}\;).
\end{equation}
From equations (1)-(4) it is evident that function $f\left(\cdot\right)$ also has the given parity $\varepsilon$:
\[
f(\phi,\boldsymbol{\vec{\eta}};\chi,\boldsymbol{\vec{\xi}}\;)=(-1)^{\varepsilon}f(\pi-\phi,-\boldsymbol{\vec{\eta}};\pi-\chi,-\boldsymbol{\vec{\xi}}\;).
\]
The mappings (4)-(5) establish a one-to-one correspondence between $D^{(\sigma,\varepsilon)}$ and the space of infinitely differentiable functions defined on $S^{q}\otimes S^{p}$. From (2)-(5), using the same notation for the space and the operator ($D^{(\sigma,\varepsilon)}$ and $T^{(\sigma,\varepsilon)}$, respectively) of the representations, we obtain the following expression:
\begin{equation}
T^{(\sigma,\varepsilon)}(g)f(\phi,\boldsymbol{\vec{\eta}};\chi,\boldsymbol{\vec{\xi}}\;)=\left(\omega_{g}/\omega\right)^{\sigma}f(\phi_{g},\boldsymbol{\vec{\eta}}_{g};\chi_{g},\boldsymbol{\vec{\xi}}_{g}),
\end{equation}
where $\omega_{g},\phi_{g},\boldsymbol{\vec{\eta}}_{g};\chi_{g},\boldsymbol{\vec{\xi}}_{g}$ are found from the relation of: $g^{-1}k=k_{g}$. In the space $D^{(\sigma,\varepsilon)}$, let us introduce the scalar product:\footnote{The bar denotes complex conjugation.}
\begin{equation}
(f_{1},f_{2})=\int f_{1}(\phi,\boldsymbol{\vec{\eta}};\chi,\boldsymbol{\vec{\xi}}\;)\overline{f_{2}(\phi,\boldsymbol{\vec{\eta}};\chi,\boldsymbol{\vec{\xi}}\;)}(\sin\phi)^{q-2}d\phi(\sin\chi)^{p-2}d\chi(d\boldsymbol{\vec{\eta}})(d\boldsymbol{\vec{\xi}}\;),
\end{equation}
where $(d\boldsymbol{\vec{\eta}})$ and $(d\boldsymbol{\vec{\xi}}\;)$ are normalized measures on  $S^{q-1}$ and  $S^{p-1}$, respectively, [3].

In the case $q=2$ or $p=q=2$, the differentials $(d\boldsymbol{\vec{\eta}})$ or $(d\boldsymbol{\vec{\eta}})$$(d\boldsymbol{\vec{\xi}}\;)$ are omitted because they are constants, and the other variables are integrated over the whole region.

It follows directly form (6) that the scalar product (7) is invariant for $\mathrm{Re}\,\sigma=-\frac{p+q-2}{2},\;\varepsilon=0,\,1$. By filling the space $D^{(\sigma,\varepsilon)}$ with the scalar product (7), we obtain the Hilbert space $H^{(\sigma,\varepsilon)}$, and the most degenerated IUR's of the continuous principal series of the group $SO(p,q)$. It is easy to show that $(\sigma,\,\varepsilon)$ and $(2-p-q-\sigma,\,\varepsilon)$ representations are unitary equivalent [6].

Formulas (6)--(7) allow the integral representations for matrix elements of the operator of hyperbolic rotations on the surface $(k_{1},k_{p+q})$ in canonical basis to be determined [3]. With respect to the canonical basis vector (which is invariant under the subgroup $SO(p)\otimes SO(q)$) the matrix elements which are the zonal spherical functions of the group $SO(p,q)$ as determined in [3] are of interest. Denoting these functions by $Z_{\sigma}^{[p,q]}(\alpha)$ we obtain from (6)--(7) the following integral representations:
\begin{enumerate}	
	\item In the case of $p\geq q\geq3$;
	\begin{equation}
	Z_{\sigma}^{[p,q]}(\alpha)=\frac{\Gamma\left(\frac{p}{2}\right)\Gamma\left(\frac{q}{2}\right)}{\pi\Gamma\left(\frac{p-1}{2}\right)\Gamma\left(\frac{q-1}{2}\right)}\,\intop_{-1}^{+1}\intop_{-1}^{+1}\,\Lambda^{\sigma/2}\left(1-x^{2}\right)^{\frac{p-3}{2}}\left(1-y^{2}\right)^{\frac{q-3}{2}}dxdy;
	\end{equation}
	
	\item In the case of $p\geq3,\;q=2$;
	\begin{equation}
	Z_{\sigma}^{[p,2]}(\alpha)=\frac{\Gamma\left(\frac{p}{2}\right)}{\pi^{3/2}\Gamma\left(\frac{p-1}{2}\right)}\intop_{0}^{2\pi}\intop_{-1}^{+1}\Lambda^{\sigma/2}\left(1-x^{2}\right)^{\frac{p-3}{2}}d\phi dx;
	\end{equation}
	\item In the case of $p=q=2$;
	\begin{equation}
	Z_{\sigma}^{[2,2]}(\alpha)=\frac{1}{4\pi^{2}}\intop_{0}^{2\pi}\,\intop_{0}^{2\pi}\Lambda^{\sigma/2}d\phi d\chi.
	\end{equation}
	
\end{enumerate}

In (8)--(10) we used the following notation:
\begin{eqnarray}
\Lambda\left(\alpha;x,y\right) & = & (\cos\phi\cosh\alpha-\cos\chi\sinh\alpha)^{2}+\sin^{2}\phi=\nonumber \\
& = & 1+\left(x^{2}+y^{2}\right)\sinh^{2}\alpha-2xy\sinh\alpha\cosh\alpha,\nonumber \\
&  & x=\cos\chi,\quad y=\cos\phi.
\end{eqnarray}

It is to be noted that zonal function exists only for even representations $(\varepsilon=0)$.

\section{Zonal functions and Horn's series}
The main formulas used for the calculation of zonal functions of $SO_{0}(p,q)$ groups are the integral representations (8)--(10) and the Taylor expansions for the function $\Lambda^{\sigma/2}$ given as:
\[
\Lambda^{\sigma/2}=\sum_{\nu=0}^{1}\frac{\left(-\sigma xy\tanh\alpha\right)^{\nu}}{\cosh\alpha}\sum_{l=0}^{\infty}\frac{\left(\nu+\frac{1}{2}\right)_{l}}{l!}\left(\tanh\alpha\right)^{2}\times
\]
\begin{equation}
\times\,F_{2}\left(\nu-\frac{\sigma}{2},-l,-l;\nu+\frac{1}{2},\nu+\frac{1}{2};x^{2},y^{2}\right).
\end{equation}
Here we use the following notation for Pochhammer's symbols,  $(a)_{n}$, and Appell's functions of a second kind [11] :
\[
F_{2}\left(\nu-\frac{\sigma}{2},-l,-l;\nu+\frac{1}{2},\nu+\frac{1}{2};x^{2},y^{2}\right)=\sum_{m,n=0}^{l}\frac{\left(\nu-\sigma/2\right)_{m+n}(-l)_{m}(-l)_{n}}{\left(\nu+1/2\right)_{m}\left(\nu+1/2\right)_{n}m!n!}x^{2m}y^{2n}.
\]
Series (12) converges uniformly and absolutely for sufficiently small values of $\alpha$,  namely at $\left(\cosh(2\alpha)<3\right)$ (the sufficient condition!).

Substituting the expansion into (12), alternatively in (8)--(10), and integrating for zonal function of $SO_{0}(p,q), p > q > 2$, groups, we obtain the following expression:
\begin{eqnarray}
Z_{\sigma}^{[p,q]}(\alpha) & = & \sum_{m=0}^{\infty}\frac{(1/2)_{m}(-\sigma/2)_{m}\left(1-\frac{\sigma+q}{2}\right)_{m}}{\left(p/2\right)_{m}(q/2)_{m}m!}\,\frac{\left(\tanh\alpha\right)^{2m}}{\cosh\alpha}\times\nonumber \\
& \times & _{3}F_{2}\begin{pmatrix}-m, & 1-m-\frac{p}{2}, & \frac{\sigma+q}{2};\\
&  &  & 1\\
1-m+\frac{\sigma}{2}, & \frac{\sigma+q}{2}-m;
\end{pmatrix}.
\end{eqnarray}

From the results of Horn's theory for hypergeometric series of two variables, it follows that the series (13) converges for all finite $\alpha$.

Furthermore, assuming $q=1$ in (13), after necessary simplifications, we obtain the following formula:
\begin{equation} Z_{\sigma}^{[p,1]}(\alpha)=\,_{2}F_{1}\left(-\frac{\sigma}{2},\frac{\:1-\sigma}{2};\:\frac{p}{2};\:\tanh^{2}\alpha\right)(\cosh\alpha)^{\sigma},\
\end{equation}
which exactly coincides with the expression for zonal functions of $SO_{0}(p,1)$ groups. Thus, formula (13) is valid for all $SO_{0}(p, q), p>2, q>1$ groups. 

Expression (13) for zonal functions of $SO_{0}(p, q)$ groups can be rewritten more compactly with generalized hyperge­ometric functions of two variables, /See Appendix, Eq.(51)/:
\begin{equation}
Z_{\sigma}^{[p,q]}(\alpha)=\frac{1}{\cosh\alpha}\,_{4}F_{2}\left[\begin{array}{c|cc|c}
10 &-\sigma$/2$, & 1-\frac{\sigma+q}{2};&\\
01 &(\sigma+$q)/2$, & &\\
11 & $1/2$, & &tanh^{2}\alpha,tanh^{2}\alpha\\\cline{1-3}
11 & $q/2$, & &\\
10 & $p/2$, & &\\
\end{array}\right]
\end{equation}

Rearranging indexes of toting it is possible to receive alternative expressions for zonal functions:
\begin{equation}
Z_{\sigma}^{[p,q]}(\alpha)=\frac{1}{\cosh\alpha}\,_{4}F_{2}\left[\begin{array}{c|cc|c}
10 &-\sigma$/2$, & 1-\frac{\sigma+p}{2};&\\
01 &(\sigma+$p)/2$, & &\\
11 & $1/2$, & &tanh^{2}\alpha,tanh^{2}\alpha\\\cline{1-3}
11 & $p/2$, & &\\
10 & $q/2$, & &\\
\end{array}\right]
\end{equation}

The formulas (15)--(16) express properties of a symmetry of zonal functions concerning permutation $p$ and $q$.

The above-presented results can be summarized in terms of the following theorem:

%% - theorems and proofs
\begin{theorem}[]
	Zonal spherical functions for all $SO(p,q),p\geq 2, q\geq 1$ groups in general are expressed by hypergeometric functions of two variables according to the formulas (15)-(16).
	% The optional material will be typeset as part of the theorem heading
\end{theorem}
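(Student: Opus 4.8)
The plan is to assemble the theorem from the ingredients already laid out in Sections~2 and~3, treating the three ranges of parameters separately and then checking that the single closed form (15)--(16) subsumes all of them. The starting point is the integral representations (8)--(10) together with the expansion (12) of $\Lambda^{\sigma/2}$ into Appell $F_2$ functions; the entire argument is a matter of substituting, interchanging summation and integration (justified by the uniform and absolute convergence of (12) for $\cosh 2\alpha<3$), and evaluating the resulting $x$- and $y$-integrals of Beta type.

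First I would handle the generic case $p\geq q\geq 3$. Inserting (12) into (8) and integrating term by term, each monomial $x^{2m}y^{2n}$ pairs with the weights $(1-x^2)^{(p-3)/2}$ and $(1-y^2)^{(q-3)/2}$ to produce Beta integrals $B(m+\tfrac12,\tfrac{p-1}{2})$ and $B(n+\tfrac12,\tfrac{q-1}{2})$, whose Gamma-function values combine with the normalizing constant in (8) to give ratios of Pochhammer symbols. Collecting the double sum over $m,n$ and the outer sum over $l$ from (12), and recognizing the inner finite sum as a $_3F_2$ evaluated at $1$ (after a Vandermonde/Saalschütz-type reindexing in $l$), yields exactly formula (13). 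Then I would invoke Horn's convergence theory for double hypergeometric series --- already cited in the text --- to argue that although (12) was only valid for small $\alpha$, the series (13) converges for all finite $\alpha$, so by analytic continuation it represents $Z^{[p,q]}_\sigma(\alpha)$ on the whole range. The passage from (13) to the two-variable generalized hypergeometric form (15)--(16) is then a formal rewriting using the Appendix identity~(51), and the symmetry in $p\leftrightarrow q$ is manifest by comparing (15) with (16).

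Next I would dispatch the boundary cases. For $p\geq 3,\ q=2$ one repeats the computation starting from (9): the $x$-integral is again Beta-type, while the $\phi$-integral over $[0,2\pi)$ of $y^{2n}=\cos^{2n}\phi$ gives $2\pi\binom{2n}{n}4^{-n}$, i.e. $(\tfrac12)_n/n!$; one checks that this is precisely the $q\to 2$ specialization of the Gamma-ratio obtained before, so (13) --- hence (15)--(16) --- persists. The case $p=q=2$ from (10) is the same computation with both angular integrals of $\cos^{2\bullet}$ type, and again reproduces the formula. Finally, for $q=1$ the stated reduction to (14) is claimed to follow from (13) ``after necessary simplifications''; I would verify that setting $q=1$ collapses the $_3F_2$ in (13) via a $_2F_1$ contiguous relation (the parameters $\tfrac{\sigma+q}{2}$ and $\tfrac{\sigma+q}{2}-m$ differ by $-m$, producing cancellation) so that the double sum degenerates to the single Gauss series in $\tanh^2\alpha$ with prefactor $(\cosh\alpha)^\sigma$, which is the known $SO_0(p,1)$ zonal function. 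Since (14) is itself the $q=1$ case of (15)--(16) (one index block drops out), the unified formula covers $q\geq 1$.

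The main obstacle I anticipate is not the Beta integrations, which are routine, but the combinatorial reorganization that turns the triple sum (over $m$, $n$, and the expansion index $l$) into the clean product of a prefactored series and a terminating $_3F_2(1)$ in (13): this requires identifying the correct summation variable, applying a summation theorem at unit argument (Saalschütz or a Thomae transformation) to close the inner sum, and tracking the Pochhammer bookkeeping so that the parameters land exactly as written. A secondary subtlety is the rigor of term-by-term integration and of the analytic-continuation step --- one must be explicit that (12) converges uniformly on the compact $x,y$-domain only for $\cosh 2\alpha<3$, and that it is Horn's criterion applied to (13) (not to (12)) that extends the identity to all $\alpha$. Once these two points are nailed down, verifying that the boundary cases $q=2$, $p=q=2$, and $q=1$ all specialize correctly is straightforward parameter-chasing.
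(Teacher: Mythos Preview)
Your proposal is correct and follows essentially the same route as the paper. The derivation you outline---substituting the expansion (12) into the integral representations (8)--(10), evaluating the Beta-type integrals, reorganizing into (13), and then rewriting via the Appendix notation as (15)--(16), with the $q=2$, $p=q=2$, and $q=1$ cases checked separately---is exactly what the paper carries out in the text of Section~3 \emph{before} stating the theorem; the paper's formal proof then consists only of the analytic-continuation step (phrased there as ``the principle of monodromy'' applied to the analyticity in $\alpha$ of the integrals (8)--(10)), which is precisely the step you identify using Horn's convergence criterion on (13).
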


\begin{proof}
	% the proof
	In order to complete the proof of the theorem it is sufficient to note that the zonal functions (8)-(10) are analytic functions of $\alpha$ and sufficient to use the principle of monodromy.
\end{proof}
% An end-of-proof symbol (open box) will be typeset at the
% end of the proof.
\section{The canonical basis of IUR's of group $SO_{0}(p,q)$}
The canonical basis of most degenerate IUR's of $SO_{0}(p,q)$ groups is constructed using the results of [3]. It follows that the elements of canonical basis can be represented in the following form:
\begin{enumerate}
	\item In the case of $p\geq q\geq3$:
	\begin{eqnarray}
	\Xi_{\lambda lL,\mu mM}^{(\sigma,\varepsilon)}(\phi,\boldsymbol{\vec{\eta}};\chi,\boldsymbol{\vec{\xi}}\;) & = & a_{\lambda l\mu m}^{pq}C_{\lambda-l}^{l+\frac{q-2}{2}}(\cos\phi)\sin^{l}\phi\Xi_{L}^{l}(\boldsymbol{\vec{\eta}})\times\nonumber \\
	& \times & C_{\mu-m}^{m+\frac{p-2}{2}}(\cos\chi)\sin^{m}\chi\Xi_{M}^{m}(\boldsymbol{\vec{\xi}}\;),\nonumber \\
	&  & \lambda\geq l\geq0,\quad\mu\geq m\geq0;
	\end{eqnarray}
	\item In the case of $p\geq3,\;q=2$:
	\begin{eqnarray}
	\Xi_{\lambda\mu mM}^{(\sigma,\varepsilon)}(\phi;\chi,\boldsymbol{\vec{\xi}}\;) & = & a_{\mu m}^{p}C_{\mu-m}^{m+\frac{p-2}{2}}(\cos\chi)\sin^{m}\chi\Xi_{M}^{m}(\boldsymbol{\vec{\xi}}\;)e^{i\lambda\phi},\nonumber \\
	&  & \mu\geq m\geq0;
	\end{eqnarray}
	\item In the case of $p=q=2$:
	\begin{equation}
	\Xi_{\lambda\mu}^{(\sigma,\varepsilon)}(\phi;\chi)=\frac{1}{2\pi}e^{i(\lambda\phi+\mu\chi)}.
	\end{equation}
\end{enumerate}

In formulas (17)-(18) the following notations are adopted:

$\lambda,\mu,l,m$ -- are integers;

$L,M$--multi- indices which are non-negative integers;

£$\Xi_{L}^{l}(\boldsymbol{\vec{\eta}}),\:\Xi_{M}^{m}(\boldsymbol{\vec{\xi}}\;)$ -- are the elements of canonical basis of $SO(p-1)$ and $SO(q-1)$ groups, respectively, [3];

$a_{\lambda l\mu m}^{pq}$, $a_{\mu m}^{p}$ -- are normalization multipliers which are selected in such a way that the elements of canonical basis (17)-(19) form an orthonormalized system with respect fo the scalar product (7):
\begin{eqnarray}
a_{\lambda l\mu m}^{pq} & = & \frac{\Gamma\left(l+\frac{q-2}{2}\right)\Gamma\left(m+\frac{p-2}{2}\right)}{\pi2^{4-l-m-(p+q)/2}}\times\nonumber \\
& \times & \sqrt{\frac{(\lambda-l)!(\mu-m)!(2\lambda+q-2)(2\mu+p-2)}{\Gamma(\lambda+l+q-2)\Gamma(\mu+m+p-2)}},
\end{eqnarray}
\begin{equation}
a_{\mu m}^{p}=\frac{\Gamma\left(m+\frac{p-2}{2}\right)}{\pi}\sqrt{2^{p+2m-5}\frac{(\mu-m)!(2\mu+p-2)}{\Gamma(\mu+m+p-2)}}.
\end{equation}
Furthermore, there is a restriction by the parity of the representation $\varepsilon$:

\begin{equation}
\lambda+\mu=\varepsilon\left(\mathrm{mod}\,2\right)\label{eq:310}
\end{equation}

The integral representations for matrix elements of the operator of hyperbolic rotations on the plane $(k_{1},k_{p+q})$	in canonical basis, particularly for associated functions, can be written using formulas (5)-(6) and (11)-(13). Assuming the notations	for	associated	functions,	we	obtain the following integral	representations using group-theoretical methods:
\begin{enumerate}
\item[1.] In the case of $p\geq q\geq3$:
\begin{eqnarray}
P_{\sigma\lambda\mu}^{[p,q]}(\alpha) & = & a_{\lambda\mu}^{pq}\intop_{-1}^{+1}\intop_{-1}^{+1}\Lambda^{\sigma/2}C_{\mu}^{\frac{p-2}{2}}(x)C_{\lambda}^{\frac{q-2}{2}}(y)\times\nonumber \\
& \times & \left(1-x^{2}\right)^{\frac{p-3}{2}}\left(1-y^{2}\right)^{\frac{q-3}{2}}dxdy;
\end{eqnarray}
\item[2.] In the case of $p\geq3,\;q=2$:
\begin{equation}
P_{\sigma\lambda\mu}^{[p,2]}(\alpha)=a_{\mu}^{p}\intop_{0}^{2\pi}\intop_{-1}^{+1}\Lambda^{\sigma/2}C_{\mu}^{\frac{p-2}{2}}(x)e^{-i\lambda\phi}\left(1-x^{2}\right)^{\frac{p-3}{2}}d\phi dx;
\end{equation}
\item[3.] In the case of $p=q=2$:
\begin{equation}
P_{\sigma\lambda\mu}^{[2,2]}(\alpha)=\frac{1}{4\pi^{2}}\intop_{0}^{2\pi}\,\intop_{0}^{2\pi}\Lambda^{\sigma/2}e^{-i(\lambda\phi+\mu\chi)}d\phi d\chi.
\end{equation}	
\end{enumerate}
Here we used the following notations:
\begin{eqnarray}
a_{\mu}^{p} & = & \Gamma\left(\frac{p-2}{2}\right)\sqrt{\frac{2^{p-6}\mu!(2\mu+p-2)\Gamma\left(\frac{p}{2}\right)}{\pi^{7/2}\Gamma(\mu+p-2)\Gamma\left(\frac{p-1}{2}\right)}}\quad ,\nonumber\\
a_{\lambda\mu}^{pq} & = & \Gamma\left(\frac{p-2}{2}\right)\Gamma\left(\frac{q-2}{2}\right)\sqrt{\frac{2^{p+q-8}\Gamma\left(\frac{p}{2}\right)\Gamma\left(\frac{q}{2}\right)}{\pi^{3}\Gamma\left(\frac{p-1}{2}\right)\Gamma\left(\frac{q-1}{2}\right)}}\times\nonumber\\
& \times & \sqrt{\frac{\lambda!\mu!(2\mu+p-2)(2\lambda+q-2)}{\Gamma(\lambda+q-2)\Gamma(\mu+p-2)}}\ .
\end{eqnarray}

Remember that $\Lambda^{\sigma/2}$--function is defined by (11).

It is to be noted that the associated functions exist only for even $(\varepsilon=0)$ representation as in the case of zonal functions. This follows directly from their definition. Particularly, in order to account for the condition (22), we must assume that:
\begin{equation}
\lambda=\nu+2r,\quad\mu=\nu+2s,\quad\nu=0,1,
\end{equation}
where $r,s$ -- are positive integers and introduce the following function:
\begin{equation}
\text{\ensuremath{\mathbb{P}}}_{\sigma rs}^{pq\nu}(\alpha)=P_{\sigma\lambda\mu}^{[p,q]}(\alpha).
\end{equation}
The restrictions on the possible values of $r,s$ are the same as the restrictions for $\lambda,\mu$.
\section{Associated functions and Horn's series}
The main formulas used for the calculation of zonal functions of $SO_{0}(p,q)$ groups are the integral representations (8)-(10) and the Taylor expansions for the $\Lambda^{\sigma/2}$-function given as:

\begin{eqnarray*}
	\text{\ensuremath{\mathbb{P}}}_{\sigma rs}^{pq\nu}(\alpha) & = & \frac{(-1)^{s+r+\nu}}{\cosh\alpha}A_{1}^{pq}A_{2}^{pq}\sum_{l=\max(s,r)}^{\infty}\frac{l!\left(\nu+\frac{1}{2}\right)_{l}}{(l-s)!(l-r)!}(\tanh\alpha)^{2l+\nu}\times
\end{eqnarray*}
\begin{equation}
\times F_{2}\left(s+r+\nu-\frac{\sigma}{2},s-l,r-l;2s+\nu+\frac{p}{2},2r+\nu+\frac{q}{2};1,1\right),
\end{equation}
where
\begin{eqnarray*}
	A_{1}^{pq} & = & \frac{2^{\frac{p+q}{2}+\nu-3}(-\sigma/2)_{s+r+\nu}}{\Gamma\left(2s+\nu+\frac{p}{2}\right)\Gamma\left(2r+\nu+\frac{q}{2}\right)}\sqrt{\frac{\pi\Gamma(p/2)\Gamma(q/2)}{\Gamma\left(\frac{p-1}{2}\right)\Gamma\left(\frac{q-1}{2}\right)}}\times\\
	& \times & \sqrt{\frac{\Gamma(2s+\nu+p-1)\Gamma(2r+\nu+q-1)}{(2s+\nu)!(2r+\nu)!}},
\end{eqnarray*}
\[
A_{2}^{pq}=\sqrt{\frac{\left(2s+\nu+\frac{p-2}{2}\right)\left(2r+\nu+\frac{q-2}{2}\right)}{(2s+\nu+p-2)(2r+\nu+q-2)}}.
\]
Eq.(29) for associated functions of $SO(p,q)$ groups can be rewritten more compactly using generalized hypergeometric functions of the two variables, (Appendix, Eq.51):
\begin{eqnarray*}
	\text{\ensuremath{\mathbb{P}}}_{\sigma rs}^{pq\nu}(\alpha) & = & \frac{(2s+\nu)!\left(\frac{2-\sigma-q}{2}\right)_{s-r}}{(s-r)!4^{s}\cosh\alpha}A_{1}^{pq}A_{2}^{pq}(\tanh\alpha)^{2s+\nu}\times
\end{eqnarray*}
\begin{equation}
\times\,{}_{5}F_{3}\left[\begin{array}{c|cc|c}
11 & s+1, & s+\nu+\frac{1}{2}\\
10 & s+r+\nu-\frac{\sigma}{2}&\\
01 & \frac{q+\sigma}{2}, & s-r+\frac{p+\sigma}{2} & \tanh^{2}\alpha,\:\tanh^{2}\alpha\\\cline{1-3}
11 & 2s+\nu+p/2, & s+r+\nu+q/2\\
01 & 1+s-r&
\end{array}\right]
\end{equation}
Formula (30) is valid for $s\geq r$. For $s\leq r$ the expression for associated function is derived from (29) using the rearrangement:
\[ \begin{bmatrix}p & q & r & s\\
q & p & s & r
\end{bmatrix}\]

From this, one can attain the symmetry property of the associated function for the groups $SO(p,q)$:
\begin{equation}
\text{\ensuremath{\mathbb{P}}}_{\sigma rs}^{pq\nu}(\alpha)=\text{\ensuremath{\mathbb{P}}}_{\sigma sr}^{q\text{p}\nu}(\alpha)\text{.}\label{eq:320}
\end{equation}
It is also important to note that formulas (30)-(32) are true for all $SO_{0}(p,q),\;p\geq2,\;q\geq2$ groups.

Assuming $s=r=\nu=0$ in (29)-(30), the expressions for zonal functions of $SO_{0}(p,q),\;p\geq2,\;q\geq1$ groups can be derived as:
\begin{equation}
Z_{\sigma}^{[p,q]}(\alpha)=\frac{1}{\cosh\alpha}\sum_{l=0}^{\infty}\frac{\left(\frac{1}{2}\right)_{l}}{l!}F_{2}\left(-\frac{\sigma}{2},-l,-l;\frac{p}{2},\frac{q}{2};1,1\right)\left(\tanh\alpha\right)^{l/2};\label{eq:321}
\end{equation}
\begin{equation}
Z_{\sigma}^{[p,q]}(\alpha)=\frac{1}{\cosh\alpha}{}_{5}F_{3}\left[\begin{array}{c|cc|c}
11 & 1, & 1/2\\
10 & -\sigma/2&\\
01 & \frac{q+\sigma}{2}, & \frac{p+\sigma}{2} & \tanh^{2}\alpha,\:\tanh^{2}\alpha\\\cline{1-3}
11 & p/2, & q/2\\
01 & 1&
\end{array}\right]
\end{equation}

Notice that the expressions (15)-(16) and (33) for zonal functions are equivalent as they are both derived from (32) depending on the selection of representation of the Appell's function a second kind, which is used for the summation. The advantages of E(32)-(33) involve the fact that symmetry between $p$ and $q$ is evident.

The following expansions are obtained from (23) - (25) for the function $\Lambda^{\sigma/2}$:
\begin{eqnarray}
\Lambda^{\sigma/2} & = & \frac{\pi\Gamma\left(\frac{p-1}{2}\right)\Gamma\left(\frac{q-1}{2}\right)}{\Gamma(p/2)\Gamma(q/2)}\sum_{\lambda\mu}a_{\lambda\mu}^{pq}P_{\sigma\lambda\mu}^{[p,q]}(\alpha)C_{\mu}^{\frac{p-2}{2}}(x)C_{\lambda}^{\frac{q-2}{2}}(y),\label{eq:323}\\
\Lambda^{\sigma/2} & = & \frac{2\pi^{3/2}\Gamma\left(\frac{p-1}{2}\right)}{\Gamma(p/2)}\sum_{\lambda\mu}a_{\mu}^{p}P_{\sigma\lambda\mu}^{[p,2]}(\alpha)C_{\mu}^{\frac{p-2}{2}}(x)e^{i\lambda\phi},\label{eq:324}\\
\Lambda^{\sigma/2} & = & \sum_{\lambda\mu}P_{\sigma\lambda\mu}^{[2,2]}(\alpha)e^{i(\lambda\phi+\mu\chi)},\label{eq:325}
\end{eqnarray}
where summations are performed over all admissible values $\lambda,\mu$, which satisfy the restriction $\lambda+\mu=0\:(\mathrm{mod}\,2)$.

Expansions (34)-(36) allow supplementary functional relationships to be obtained for the spherical functions for $SO_{0}(p,q)$ groups, and these may be useful in the solutions of many problems. It should be noted that the convergence of the series in (34)-(36) must be understood in the sense of convergence of distributions over Fr\`{e}chet spaces [12]: 
\[\textbf{$C^{\infty}\left(\left[-1,1\right]\times\left[-1,1\right]\right),\;C^{\infty}\left(\left[-1,1\right]\times\left[0,2\pi\right]\right),\;C^{\infty}\left(\left[0,2\pi\right]\times\left[0,2\pi\right]\right)$,} \]
respectively\footnote{In this case the ends of interval $\left[0,2\pi\right]$ must be identified.}.

To conclude, it is to be noted that we have studied the functions of $SO_{0}(p,q)$ groups for the representations of the continuous principal series. The study of representations of a complementary series is reduced to the analytic continuation in the $\sigma$. 

The theorem on the transformation of derivative distributions, concentrated on smooth surfaces with respect to infinite-dimensional Lie groups $C^{\infty}\left(\mathbb{R^{\mathit{n}}},GL(n)\right)$ plays an important role in the study of representations of discrete and exceptional series. This theorem is of independent significance, so we give the proof in the next section.

\section{Transformation of derivatives of distributions\\concentrated on smooth surfaces}
The problem of transformation of generalized functions, concentrated on smooth surfaces [13], and their derivatives arise in many problems of theoretical and mathematical physics and in particular, in the theory of representations of Lie groups. In this section the problem is solved for the infinite-dimensional Lie group $C^{\infty}\left(\mathbb{R^{\mathit{n}}},GL(n)\right)$.

Suppose that $(n-k)$ -- dimensional surface $\mathbb{S}$ in $\mathbb{R}^{\mathit{n}}$ is given by two different system of equations:
\begin{eqnarray}
\mathcal{P}_{i}(x) & = & 0,\quad\quad\mathcal{P}_{i}(\cdot)\in C^{\infty}\left(\mathbb{R^{\mathit{n}}}\right),\nonumber \\
\mathcal{Q}_{i}(x) & = & 0,\quad\quad\mathcal{Q}_{i}(\cdot)\in C^{\infty}\left(\mathbb{R^{\mathit{n}}}\right);\quad i=1,2,\ldots,k.
\end{eqnarray}
Here functions $\mathcal{P}_{i}(\cdot)$ and $\mathcal{Q}_{i}(\cdot)$ are interconnected:
\begin{equation}
\mathcal{Q}_{i}(x)=\sum_{j}\mathcal{P}_{j}(x)\alpha_{ji}(x);\qquad\mathcal{P}_{i}(x)=\sum_{j}\mathcal{Q}_{j}(x)\beta_{ji}(x),\label{eq:Q23}
\end{equation}
where
\begin{eqnarray}
\sum_{i}\alpha_{ji}(x)\beta_{ik}(x) & = & \delta_{jk};\qquad\sum_{i}\beta_{ji}(x)\alpha_{ik}(x)=\delta_{jk},\nonumber \\
\alpha_{ij}(\cdot)\in C^{\infty}\left(\mathbb{R^{\mathit{n}}}\right), &  & \beta_{ij}\in C^{\infty}\left(\mathbb{R^{\mathit{n}}}\right).\label{eq:Q25}
\end{eqnarray}
The last conditions imply, in particular, non-singularity of matrices $\alpha\equiv\Vert\alpha_{ij}\Vert$ and $\beta\equiv\Vert\beta_{ij}\Vert$, that is the following conditions hold true:
\begin{eqnarray}
\det\alpha\cdot\det\beta & = & 1,\qquad\det\alpha\neq0,\quad\det\beta\neq0.\label{eq:Q26}
\end{eqnarray}
Obviously matrices $\Vert\alpha\Vert$ and $\Vert\beta\Vert$ are elements of  the infinite group Lie $C^{\infty}\left(\mathbb{R^{\mathit{n}}},GL(n)\right)$.

In addition, assume that the family of surfaces $\mathcal{P}_{i}(x)=\mathrm{const}$ and $\mathcal{Q}_{i}(x)=\mathrm{const}$ form a correct grid, [13]. Under these assumptions there exists the following relationship between $\delta$-functions, concentrated on the surface $\mathbb{S}$ and corresponding equations (37)-(38), [13]:
\begin{equation}
\delta\left(\mathcal{Q}_{1},\ldots,\mathcal{Q}_{k}\right)=\frac{1}{\det\alpha}\delta\left(\mathcal{P}_{1},\ldots,\mathcal{P}_{k}\right).
\end{equation}
Using (41) this equality can be rewritten as:
\begin{equation}
\delta\left(\mathcal{Q}_{1},\ldots,\mathcal{Q}_{k}\right)=\det\beta\,\cdot\delta\left(\mathcal{P}_{1},\ldots,\mathcal{P}_{k}\right).
\end{equation}
Our aim is to obtain an analogous relation for derivatives of $\delta$-functions. 

In the rest of this section double repeated indexes will mean summation over all possible values of indexes (from 1 to $n$ for indexes of coordinates of $x$, from 1 to $k$ for indexes of variables $\mathcal{P}$, and $\mathcal{Q}$). We also use the following notations:
\begin{eqnarray*}
	\delta_{ijl\cdots}(\mathcal{Q}) & = & \frac{\partial}{\partial\mathcal{Q}_{i}\partial\mathcal{Q}_{j}\partial\mathcal{Q}_{l}\cdots}\delta\left(\mathcal{Q}\right),\\
	\delta_{ijl\cdots}(P) & = & \frac{\partial}{\partial{P}_{i}\partial{P}_{j}\partial{P}_{l}\cdots}\delta\left(\mathcal{{P}}\right).
\end{eqnarray*}
\begin{theorem}
	The following holds true:
\begin{equation}	
	\delta_{i_{1}\cdots i_{s}}(\mathcal{Q})=\left(\det\beta\right)\,\,\cdot\sum_{j_{1}\cdots j_{s}}\,\beta_{i_{1}j_{1}}\cdots\beta_{i_{s}j_{s}}\delta_{j_{1}\cdots j_{s}}(P).
\end{equation}
\end{theorem}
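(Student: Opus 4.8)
The plan is to argue by induction on the order $s$. The base case $s=0$ is exactly the relation $\delta(\mathcal{Q}_{1},\dots,\mathcal{Q}_{k})=\det\beta\cdot\delta(\mathcal{P}_{1},\dots,\mathcal{P}_{k})$ already established above (from [13]). For the inductive step I would assume the asserted identity for a given $s$ and apply $\partial/\partial\mathcal{Q}_{i_{s+1}}$ to both sides; since mixed partial derivatives of a distribution commute, the left side becomes $\delta_{i_{1}\cdots i_{s+1}}(\mathcal{Q})$, and the whole matter reduces to evaluating $\partial/\partial\mathcal{Q}_{i_{s+1}}\bigl[(\det\beta)\,\beta_{i_{1}j_{1}}\cdots\beta_{i_{s}j_{s}}\,\delta_{j_{1}\cdots j_{s}}(P)\bigr]$ and showing it equals $(\det\beta)\,\beta_{i_{1}j_{1}}\cdots\beta_{i_{s+1}j_{s+1}}\,\delta_{j_{1}\cdots j_{s+1}}(P)$.

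The computation rests on three ingredients. (i) The Leibniz rule, which splits $\partial/\partial\mathcal{Q}_{i_{s+1}}$ between the smooth coefficient $(\det\beta)\prod_{a}\beta_{i_{a}j_{a}}$ and the distribution $\delta_{j_{1}\cdots j_{s}}(P)$. (ii) The chain rule: after completing $(\mathcal{P}_{1},\dots,\mathcal{P}_{k})$ and $(\mathcal{Q}_{1},\dots,\mathcal{Q}_{k})$ to coordinate systems on $\mathbb{R}^{n}$ that share the same transversal (complementary) coordinates, it takes the purely transversal form $\partial/\partial\mathcal{Q}_{i}=(\partial\mathcal{P}_{l}/\partial\mathcal{Q}_{i})\,\partial/\partial\mathcal{P}_{l}$ with no complementary-coordinate term, and \eqref{eq:Q23} gives $\partial\mathcal{P}_{l}/\partial\mathcal{Q}_{i}=\beta_{il}+\mathcal{Q}_{m}\,\partial\beta_{ml}/\partial\mathcal{Q}_{i}$. (iii) The elementary identities $\mathcal{P}_{m}\,\delta_{j_{1}\cdots j_{s}}(P)=-\sum_{a:\,j_{a}=m}\delta_{j_{1}\cdots\widehat{j_{a}}\cdots j_{s}}(P)$ (a transversal Euler relation for derivatives of $\delta$), used together with $\mathcal{Q}_{i}=\mathcal{P}_{m}\alpha_{mi}$ from \eqref{eq:Q23} and the biorthogonality $\alpha\beta=\beta\alpha=1$ of \eqref{eq:Q25}.

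Carrying this out: the term in which $\partial/\partial\mathcal{Q}_{i_{s+1}}$ differentiates $\delta_{j_{1}\cdots j_{s}}(P)$ produces, through the $\beta_{i_{s+1}l}$ part of the Jacobian, exactly the wanted leading term $(\det\beta)\,\beta_{i_{1}j_{1}}\cdots\beta_{i_{s+1}j_{s+1}}\,\delta_{j_{1}\cdots j_{s+1}}(P)$, and, through the $\mathcal{Q}_{m}\,\partial\beta_{ml}/\partial\mathcal{Q}_{i_{s+1}}$ part, a lower-transversal-order remainder in which one rewrites $\mathcal{Q}_{m}=\mathcal{P}_{l}\alpha_{lm}$ and applies (iii); the term in which $\partial/\partial\mathcal{Q}_{i_{s+1}}$ differentiates the smooth coefficient is also of lower transversal order. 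I would then verify that all lower-order pieces cancel in pairs, exactly by the mechanism of the one-variable model $s=1$: there $\partial_{\mathcal{Q}}\!\bigl(\beta\,\delta(\mathcal{P})\bigr)=\beta^{2}\delta_{1}(P)+(\partial_{\mathcal{Q}}\beta)\,\delta(\mathcal{P})-\alpha\beta\,(\partial_{\mathcal{Q}}\beta)\,\delta(\mathcal{P})$, and the last two terms cancel because $\alpha\beta=1$ (equivalently \eqref{eq:Q26}), leaving only $\beta^{2}\delta_{1}(P)$. For general $s$ the same relations $\alpha\beta=\beta\alpha=1$ pair each coefficient-derivative term with a Jacobian-correction term and kill both.

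The main obstacle is precisely this bookkeeping: organizing, for arbitrary $s$ and arbitrary $k$, the Leibniz and chain-rule corrections so that the lower-order pieces demonstrably cancel and only the single top-order monomial in the $\beta_{i_{a}j_{a}}$ survives --- this is what makes the right side of the asserted identity ``pure'' rather than a sum over derivatives of all intermediate orders. A point to be clarified before the induction even begins is that a derivative of a $\delta$-function concentrated on $\mathbb{S}$ is not determined by $\mathbb{S}$ alone but by a choice of transversal variables; one therefore works throughout with the correct grids furnished by the level families $\mathcal{P}_{i}=\mathrm{const}$ and $\mathcal{Q}_{i}=\mathrm{const}$ (as assumed in the hypotheses) and with compatible completions of the two families to coordinate systems sharing the same transversal coordinates --- this is what legitimizes the transversal chain rule used in step (ii) and makes both sides of the identity well defined.
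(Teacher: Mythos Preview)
Your induction on $s$ with base case $\delta(\mathcal{Q})=\det\beta\cdot\delta(\mathcal{P})$ is exactly the paper's strategy, and the same three ingredients drive both arguments: the Euler/cyclic relation obtained by differentiating $\mathcal{Q}_{l}\,\delta(\mathcal{Q})=0$, the Jacobi identity $\partial_{x_{\mu}}\det\beta=\alpha_{ji}\,\beta_{ij,\mu}\det\beta$, and the biorthogonality $\alpha\beta=\beta\alpha=1$. The one genuine difference is the variable used in the inductive step. You apply $\partial/\partial\mathcal{Q}_{i_{s+1}}$ directly to both sides, which obliges you to complete $(\mathcal{P}_{i})$ and $(\mathcal{Q}_{i})$ to full coordinate systems sharing the complementary variables before a ``transversal chain rule'' $\partial/\partial\mathcal{Q}_{i}=(\partial\mathcal{P}_{l}/\partial\mathcal{Q}_{i})\,\partial/\partial\mathcal{P}_{l}$ is even defined. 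The paper instead differentiates with respect to an ambient coordinate $x_{\mu}$: it computes $\partial_{x_{\mu}}\delta_{i_{1}\cdots i_{s}}(\mathcal{Q})$ once from the inductive hypothesis via Leibniz, and once directly via $\partial_{x_{\mu}}\delta_{i_{1}\cdots i_{s}}(\mathcal{Q})=\mathcal{Q}_{i,\mu}\,\delta_{i_{1}\cdots i_{s}i}(\mathcal{Q})$, then expands $\mathcal{Q}_{i,\mu}=\alpha_{ji,\mu}P_{j}+\alpha_{ji}P_{j,\mu}$, rewrites $P_{j}=\beta_{lj}\mathcal{Q}_{l}$, applies the cyclic identity, and feeds the resulting lower-order pieces back through the inductive hypothesis. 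Matching the two expressions, all Leibniz corrections cancel and one is left with $(\det\beta)\,P_{j,\mu}\,\beta_{i_{1}j_{1}}\cdots\beta_{i_{s}j_{s}}\,\delta_{j_{1}\cdots j_{s}j}(P)=\alpha_{ji}P_{j,\mu}\,\delta_{i_{1}\cdots i_{s}i}(\mathcal{Q})$, from which the $(s{+}1)$-case follows by contracting with $\beta$. The paper's route thus avoids your coordinate-completion step altogether; your route is more direct in that the target left-hand side appears immediately, but at the cost of the extra setup you yourself flagged as needing care.
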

\begin{proof}
We prove this formula using the mathematical induction method. Assume that (43) is true. Apply differentiation to it with respect to  $x_{\mu}$:
\[
\frac{\partial}{\partial x_{\mu}}\delta_{i_{1}\cdots i_{s}}(\mathcal{Q})=\frac{\partial\det\beta}{\partial x_{\mu}}\,\beta_{i_{1}j_{1}}\cdots\beta_{i_{s}j_{s}}\delta_{j_{1}\cdots j_{s}}(P)+\left(\det\beta\right)\,\beta_{i_{1}j_{1},\mu}\cdots\beta_{i_{s}j_{s}}\delta_{j_{1}\cdots j_{s}}(P)+
\]
\begin{equation}
\cdots+\left(\det\beta\right)\,\beta_{i_{1}j_{1}}\cdots\beta_{i_{s}j_{s},\mu}\delta_{j_{1}\cdots j_{s}}(P)+\left(\det\beta\right)\,\beta_{i_{1}j_{1}}\cdots\beta_{i_{s}j_{s}}\delta_{j_{1}\cdots j_{s}}(P)P_{j,\mu},
\end{equation}
where
\[
\beta_{ij,\mu}\equiv\frac{\partial\beta_{ij}}{\partial x_{\mu}},\qquad{P}_{j,\mu}\equiv\frac{\partial{P}_{j}}{\partial x_{\mu}}.
\]
The derivative  $\frac{\partial\det\beta}{\partial x_{\mu}}$ is easily calculated:
\begin{equation}
\frac{\partial\det\beta}{\partial x_{\mu}}=\beta_{ij,\mu}\frac{\partial\det\beta}{\partial\beta_{ij}}=\beta_{ij,\mu}\alpha_{ji}\cdot\det\beta.
\end{equation}
Further, applying differentiation to $\mathcal{Q}_{l}\delta(\mathcal{Q})=0$ sequentially with respect to $\mathcal{Q}_{i_{1}}\cdots\mathcal{Q}_{i_{s+1}}$ we get:
\begin{equation}
\sum_{CP(i_{1}\cdots i_{s}i)}\delta_{li}\delta_{i_{1}\cdots i_{s}}(\mathcal{Q})+\mathcal{Q}_{l}\delta_{i_{1}\cdots i_{s}i}(\mathcal{Q})\label{eq:Q32}
\end{equation}
Here summation is done over all $(s+1)$ -- cyclic interchanging of indexes $i_{1}\cdots i_{s}i$. 

On the other side it follows from (38)-(42) that:
\begin{eqnarray*}
	\frac{\partial}{\partial x_{\mu}}\delta_{i_{1}\cdots i_{s}}(\mathcal{Q}) & = & \mathcal{Q}_{i,\mu}\delta_{i_{1}\cdots i_{s}i}(\mathcal{Q})=\\
	& = & \alpha_{ji,\mu}P_{j}\delta_{i_{1}\cdots i_{s}i}(\mathcal{Q})+\alpha_{ji}P_{j,\mu}\delta_{i_{1}\cdots i_{s}i}(\mathcal{Q})=\\
	& = & \alpha_{ji,\mu}\beta_{lj}\mathcal{Q}_{l}\delta_{i_{1}\cdots i_{s}i}(\mathcal{Q})+\alpha_{ji}P_{j,\mu}\delta_{i_{1}\cdots i_{s}i}(\mathcal{Q})=\\
	& = & -\alpha_{ji,\mu}\beta_{lj}\sum_{CP(ii_{1}\cdots i_{s})}\delta_{li}\delta_{i_{1}\cdots i_{s}}(\mathcal{Q})+\alpha_{ji}P_{j,\mu}\delta_{i_{1}\cdots i_{s}i}(\mathcal{Q})=
\end{eqnarray*}
\begin{eqnarray*}
	& = & \alpha_{ji,\mu}\det\beta\sum_{CP(ii_{1}\cdots i_{s})}\beta_{ij}\beta_{i_{1}j_{1}}\cdots\beta_{i_{s}j_{s}}\delta_{j_{1}\cdots j_{s}}(P)+\alpha_{ji}P_{j,\mu}\delta_{i_{1}\cdots i_{s}i}(\mathcal{Q})=\\
	& = & \alpha_{ji}\det\beta\sum_{CP(i_{1}\cdots i_{s}i)}\beta_{ij,\mu}\beta_{i_{1}j_{1}}\cdots\beta_{i_{s}j_{s}}\delta_{j_{1}\cdots j_{s}}(P)+\alpha_{ji}P_{j,\mu}\delta_{i_{1}\cdots i_{s}i}(\mathcal{Q}),
\end{eqnarray*}
where functions $\mathcal{P}$ and $\mathcal{Q}$ are interconnected.

Comparing the last expression with (44) and taking into account (45) we obtain:
\[
\left(\det\beta\right)\,P_{j,\mu}\beta_{i_{1}j_{1}}\cdots\beta_{i_{s}j_{s}}\delta_{j_{1}\cdots j_{s}j}(P)=\alpha_{ji}P_{j,\mu}\delta_{i_{1}\cdots i_{s}i}(\mathcal{Q}).
\]
From here using (38) we have:
\[
\delta_{i_{1}\cdots i_{s}i}(\mathcal{Q})=\left(\det\beta\right)\,\beta_{i_{1}j_{1}}\cdots\beta_{i_{s}j_{s}}\delta_{j_{1}\cdots j_{s}j}(P).
\]
It is easy to see that the last equality coincides with the formula (42) for $s+1$.

On the other side the formula (43) is true for $s=0$ as this case coincides with (42). Thus, the proof of the formula (43) is completed.
\end{proof}
We transform the main result (44) to the form suitable for applications. 

Introduce new notations:
\begin{eqnarray*}
	\delta^{\left(p_{1}\cdots p_{k}\right)}(P) & = & \frac{\partial}{\partial{P}_{\mathit{1}}^{p_{1}}\cdots\partial{P}_{k}^{p_{k}}}\delta(P),\\
	\left|p\right| & = & p_{1}+\cdots+p_{k}.
\end{eqnarray*}
Apply the formula (43) to the derivative of the $\delta$-function:
\begin{eqnarray*}
	\delta^{\left(q_{1}\cdots q_{k}\right)}(\mathcal{Q}) & = & \left(\det\beta\right)\,\beta_{\mathit{1}j_{1},1}\cdots\beta_{1j_{1},q_{1}}\beta_{\mathit{2}j_{2},1}\cdots\beta_{2j_{2},q_{2}}\times\\
	& \times & \beta_{\mathit{k}j_{k},1}\cdots\beta_{kj_{k},q_{k}}\delta_{j_{1}1\cdots j_{1}q_{1},\cdots,j_{k}1\cdots j_{k}q_{k}}(P).
\end{eqnarray*}
Next it is necessary to group similar terms on the right hand side of this equality that is to count the number of terms of the form:
\[
\left(\det\beta\right)\left(\prod_{i,j=1}^{k}\left(\beta_{ij}\right)^{r_{ij}}\right)\delta^{\left(p_{1}\cdots p_{k}\right)}(P).
\]
It is easy to see that for fixed $i$, the number of such terms coincides with the number of packing of $r_{i1}+\cdots+r_{ik}$ objects of $k$ different types into $q_{i}$ boxes [14] that is $q_{i}/\left(r_{i1}!r_{i2}!\cdots r_{ik}!\right)$.
Then applying the rule of differentiation k times [14] we get the final result:
\begin{equation}
\delta^{\left(q_{1}\cdots q_{k}\right)}(\mathcal{Q})=\left(\det\beta\right)\sum_{r_{ij}}\prod_{i=1}^{k}q_{i}!\prod_{j=1}^{k}\frac{\left(\beta_{ij}\right)^{r_{ij}}}{r_{ij}!}\delta^{\left(p_{1}\cdots p_{k}\right)}(P),
\end{equation}
where
\[
\sum_{j=1}^{k}r_{ij}=q_{i},\qquad\sum_{i=1}^{k}r_{ij}=p_{j}.
\]
Note that it follows from these formulas that:
\[
\sum_{i=1}^{k}q_{i}=\sum_{j=1}^{k}p_{j}.
\]
\section{Zonal functions of groups $SO(4,1)$, $SO(3,2)$ and $SO(4,2)$}
Since de Sitter groups and the group of conformal invariance play an important role in cosmology and the theory of elementary particles, in this section we present explicit expressions for zonal spherical functions of these groups.

Directly from the formula (14) we get the explicit expression for the zonal spherical function of the group $SO(4,1)$: 

\begin{equation}
Z_{\sigma}^{[4,1]}(\alpha)=\,_{2}F_{1}\left(-\frac{\sigma}{2},\frac{\:1-\sigma}{2};\:2;\:\tanh^{2}\alpha\right)(\cosh\alpha)^{\sigma}.
\end{equation}

Directly from the formula (32) we obtain the explicit expressions for the zonal spherical functions of groups $SO(3,2)$ and $SO(4,2)$: 

\begin{eqnarray}
Z_{\sigma}^{[3,2]}(\alpha) & = & \frac{1}{\cosh\alpha}\,\sum_{l=0}^{\infty}\frac{\left(\frac{1}{2}\right)_{l}}{l!}F_{2}\left(-\frac{\sigma}{2},-l,-l;\frac{3}{2},1;1,1\right)\left(\tanh\alpha\right)^{l/2}=\nonumber \\
& = & \frac{1}{\cosh\alpha}\,\sum_{l=0}^{\infty}\,\frac{\left(\frac{1}{2}\right)_{l}\left(\frac{3}{2}\right)_{l}\left(\frac{3+\sigma}{2}\right)_{l}\left(\frac{\sigma}{2}+1\right)_{l}}{\left(l!\right)^{2}}\times\nonumber \\
& \times & _{3}F_{2}\begin{pmatrix}-\frac{\sigma}{2}, & -l, & -l;\\
&  &  & 1\\
-\frac{\sigma+1}{2}-l, & -\frac{\sigma}{2}-l;
\end{pmatrix}\left(\tanh\alpha\right)^{l/2}
\end{eqnarray}

\begin{eqnarray}
Z_{\sigma}^{[4,2]}(\alpha) & = & \frac{1}{\cosh\alpha}\,\sum_{l=0}^{\infty}\frac{\left(\frac{1}{2}\right)_{l}}{l!}F_{2}\left(-\frac{\sigma}{2},-l,-l;2,1;1,1\right)\left(\tanh\alpha\right)^{l/2}=\nonumber \\
& = & \frac{1}{\cosh\alpha}\,\sum_{l=0}^{\infty}\frac{\left(\frac{1}{2}\right)_{l}\left(\frac{\sigma}{2}+2\right)_{l}\left(\frac{\sigma}{2}+1\right)_{l}}{\left(l!\right)^{2}\Gamma\left(l+2\right)}\times\nonumber \\
& \times & _{3}F_{2}\begin{pmatrix}-\frac{\sigma}{2}, & -l, & -l;\\
&  &  & 1\\
-\frac{\sigma}{2}-l-1, & -\frac{\sigma}{2}-l;
\end{pmatrix}\left(\tanh\alpha\right)^{l/2}
\end{eqnarray}

In conclusion, I would like to thank Prof. E.Veliev  and Prof. A.Bagirov for their attention to this work and for discussion of results.

\section*{Appendix. The generalized hypergeometric Horn's series}

The generalized hypergeometric Horn's series with $r$ variables is defined as follows, [15]-[16]:
\[
_{p}F_{q}\left[\begin {array}{ccc|c|c}
u_{\alpha_{1}1}, & \ldots, & u_{\alpha_{1}r} & \left\{ a_{\alpha_{1}}\right\} \\
& \ddots &  & \vdots\\
u_{\alpha_{s}1}, & \ldots, & u_{\alpha_{s}r} & \left\{ a_{\alpha_{s}}\right\}  & x_{1},\ldots x_{r}\\\cline{1-4}
v_{\beta_{1}1}, &\ldots, & v_{\beta_{1}r} & \left\{ b_{\beta_{1}}\right\} \\
& \ddots &  & \vdots\\
v_{\beta_{t}1}, & \ldots, & v_{\beta_{t}r} & \left\{ b_{\beta_{1}}\right\} 
\end{array}\right]=
\]
\begin{equation}
=\sum_{n_{1}\ldots n_{r}=0}^{\infty}\,\frac{\underset{\alpha=1}{\overset{p}{\prod}}(a_{\alpha},\underset{j=1}{\overset{r}{\sum}}u_{\alpha j}n_{j})}{\underset{\beta=1}{\overset{q}{\prod}}(b_{\beta},\underset{j=1}{\overset{r}{\sum}}v_{\beta j}n_{j})}\cdot\prod_{i=1}^{r}\frac{x_{i}^{n_{i}}}{n_{i}!}.
\end{equation}

Below we present notations used in (51) and satisfying conditions:

\begin{eqnarray*}
	\sum_{\alpha=1}^{p}u_{\alpha j} & = & \sum_{\beta=1}^{q}v_{\beta j}+1,\\
	(\lambda,n) & = & \frac{\Gamma(\lambda+n)}{\Gamma(\lambda)}=(\lambda)_{n},\qquad(\lambda,0)=1.
\end{eqnarray*}
where the multiples in the numerator (denominator) of (51), corresponding to the same values of $u_{\alpha j}$ (respectively, $v_{\beta j}$), $1\leq j\leq r$, are unified in the same row of (51), i.e., the set of the same values of $\alpha,\quad1\leq\alpha\leq p,$ (respectively, $\beta,\quad1\leq\beta\leq q$) are decomposed to the subsets $\alpha_{1},\ldots,\alpha_{s}$ (respectively, $\beta_{1},\ldots,\beta_{t}$) with the same values $u_{\alpha j}$ (respectively, $v_{\beta j}$), $1\leq j\leq r$.
\bigskip

%%%%%%%%%%%%%%%%%%%%%%%%%%%%%%%%%%%%%%%%%%%%%%%%%%%%%%%%%%%%%%%%%%%%%%%%%%%%%%%


\begin{thebibliography}{10}

\bibitem{} A.O. Barut, R. Raczka, (1977), Theory of Group Representations and Applications, Warszawa.

\bibitem{} Theoretico-group methods in physics. Trudi mejd. semin. v Zveinigorode. - Moscow, "Nauka", v. 1-2 (1983) pp. 499-505. (in Russian).

\bibitem{} Vilenkin N.Ya. Special functions and the theory of groups representations.- Moscow, "Nauka", (1991). (in Russian).

\bibitem{}  A.U. Klimik, Matrix elements and Clebsh-Gordon coefficients of group representations.-Kiev, "Naukovo dumka", (1979). (in Russian).

\bibitem{}[5]  C.S. Herz, Ann. of Math., 61, N3 (1955), pp. 474-523.

\bibitem{} B.A. Rajabov, Proc. Nat. Acad. Sci., Azerbaijan, 44, N5 (1988), pp. 48-52; (in Russian). 

\bibitem{} B.A. Rajabov, Proc. Nat. Acad. Sci., Azerbaijan, 44, N7 (1988), pp.21-25; (in Russian).

\bibitem{} B.A. Rajabov, Tr. J. of Physics, 18 (1994), 590 - 599.

\bibitem{} Rajabov B.A.,(2003) arXiv:math-ph/0302051, p.6.

\bibitem{} Rajabov B.A.,(2003) arXiv:math-ph/0303058, p.10.

\bibitem{} Slater L.J. Generalized hypergeometric functions, (1966), Cambridge.

\bibitem{} Treves F.,Topological Vector Spaces, Distributions and Kernels, (1967), Purdue University, Lafayette, Indiana.

\bibitem{} I.M. Gelfand, G.E. Shilov, Generalized functions and operations on them, v.1,(1959) (in Russian).

\bibitem{} F. Riordan, An introduction to combinatorial analysis, (1958), London, Chapman and Hall. 

\bibitem{} J. Horn, Math. Ann., 34, 544 (1889).

\bibitem{} A.N. Leznov, M. V. Saveliev, Preprint 72-3, Serpukhov, (1972).

\end{thebibliography}
\end{document}